\newcommand{\RM}{\mathbb{R}}
 \newcommand{\bv}{\mathbf{v}}
 \newcommand{\bx}{\mathbf{x}}
 \newcommand{\boldg}{\mathbf{g}}
 \newcommand{\by}{\mathbf{y}} 
 \newcommand{\bn}{\mathbf{n}}
 \newcommand{\NM}{\mathbb{N}}
 \newcommand{\CM}{\mathbb{C}}
 \newcommand{\curl}{\operatorname{curl}}
 \newcommand{\divv}{\operatorname{div}}
 \newcommand{\real}{\operatorname{Re}}
 \newcommand{\imag}{\operatorname{Im}}
 \newcommand{\ds}{\operatorname{ds}}
 \newcommand{\sign}{\operatorname{sign}}
 \newcommand{\dx}{\operatorname{d\mathbf{x}}}
\theoremstyle{thmstyleone}%
\newtheorem{theorem}{Theorem}%  meant for continuous numbers
\theoremstyle{thmstyletwo}%
\theoremstyle{thmstylethree}%
\begin{document}

\title[Exterior 2D Div-Curl Problem]{Explicit Formula of the Infinite Energy Solutions for the Exterior 2D Div-Curl Problem with Dirichlet Boundary Condition}

%%=============================================================%%
%% GivenName	-> \fnm{Joergen W.}
%% Particle	-> \spfx{van der} -> surname prefix
%% FamilyName	-> \sur{Ploeg}
%% Suffix	-> \sfx{IV}
%% \author*[1,2]{\fnm{Joergen W.} \spfx{van der} \sur{Ploeg} 
%%  \sfx{IV}}\email{iauthor@gmail.com}
%%=============================================================%%

%Вернуть

\author*[1]{\fnm{Aleksei} \sur{Gorshkov}}\email{alexey.gorshkov.msu@gmail.com}

\affil*[1]{\orgdiv{Mechanics and mathematics}, \orgname{Lomonosov Moscow State University}, \orgaddress{\street{Leninskie Gory}, \city{Moscow}, \postcode{119991}, \state{}, \country{Russia}}}

\keywords{Div-curl problem, Hodge-Helmholtz decomposition, no-slip condition, solenoidal flow}

%%==================================%%
%% Sample for unstructured abstract %%
%%==================================%%

\abstract{In the paper we study the 2D div-curl problem in the exterior domain which models the flow with given vorticity, divergency, boundary condition at infinity, and Dirichlet condition on the solid surface. We will find the relations
on vorticity and divergence for uniqueness solvability of the problem and deduce the explicit formula. }

\maketitle
%\tableofcontents 
\section{Introduction}
Let's consider the div-curl problem with Dirichlet boundary condition in 2D exterior domain $\Omega = \RM^2 \setminus G$:
\begin{eqnarray}
&&{\rm div}~ \bv(\bx) = \rho(\bx), \label{intro:1}\\
&&{\rm curl}~ \bv(\bx) = w(\bx), \label{intro:2}\\
&&\bv(\bx)=\boldg(\bx),~\bx\in\partial G,\label{intro:3}\\
&&\bv(\bx)\to \bv_\infty,~|\bx|\to \infty. \label{intro:4}
\end{eqnarray}

Here $\bx=(x_1,x_2) \in \Omega \subset \RM^2$, $\Omega = \RM^2\setminus \overline G$ is the exterior to bounded simply connected domain $G$ with a Lipschitz boundary,  $\bv = (v_1,v_2)$ - 2D vector field, ${\rm curl}~\bv(\bx) = \partial_{x_1}v_2 - \partial_{x_2}v_1$ is the vorticity, ${\rm div}~ \bv(\bx) = \partial_{x_1}v_1 + \partial_{x_2}v_2$ - the divergence, $\boldg(\bx)$ - given boundary condition at $\partial G$,  $\bv_\infty \in \RM^2$ - is the given constant velocity of the  flow at infinity.

The div-curl problem was the subject of many studies. In paper \cite{KirchhartSchulz} this problem was studied in bounded domain for the solenoidal flow with $w \in H^{-1}$. The space $H^{-1}$  forces the use of singular space $H^{-1/2}(\partial G)$ for tangential boundary condition. 
We study the case of square-integrable functions  $\rho(\bx)$, $w(\bx)$ with $\boldg(\bx) \in H^{1/2}(\partial G)$. In the bounded domains for flows with finite energy, this problem also was studied in \cite{Divcurl1}\cite{Divcurl2}\cite{Divcurl3}. In \cite{G1} author solved the exterior div-curl problem for solenoidal flows with no-slip boundary condition.

In the paper we will construct the explicit formula of the solution and derive several estimates including the following ones:
\begin{align}\label{nablaest}
\|\nabla \bv (\bx)\|_{L_2(\Omega)} \leq C \left ( \|\rho (\bx)\|_{L_2(\Omega)} +
\|w (\bx)\|_{L_2(\Omega)} + \|\boldg(\bx)\|_{H^{1/2}(\partial \Omega)} \right ).
\end{align}
Here $\partial \Omega$ denotes the inner boundary of the exterior domain $\Omega$

The $L_2$-estimate of the velocity field $\bv$ is an intriguing problem. Even when ones subtract $\bv_\infty$, the velocity field $\bv - \bv_\infty$ may still have infinite $L_2$-norm due to the existence of the harmonic fields with infinite kinetic energy\cite{G1}. These fields decay at infinity like $1/|\bx|$. Zero circularity and zero flux of the flow at infinity annihilate these fields with infinite energy allowing to obtain finite $L_2$-norms of $\bv - \bv_\infty$.

%This work is an extension of study of the div-curl problem  carried out by author in \cite{G1}  on the non-solenoidal case.
Our aim is to find restrictions on $w$, $\rho$ for solvability of the div-curl problem (\ref{intro:1})-(\ref{intro:4}) and to prove the estimate (\ref{nablaest}). 
Quartapelle and Valz-Gris in paper \cite{Quartapelle2} derived the {\it orthogonality conditions} on the vorticity for solenoidal flows defined in bounded domains. We will find a new criteria for solvability of the above div-curl problem in the case of non-solenoidal exterior flows. For example, for the flow around the disk with radius $r_0$ when $\Omega =\{\bx \in \RM^2,~|\bx| > r_0 \}$  with given constant flow at infinity $\bv_\infty$ these conditions will have the form:
\begin{align}
& \int_{\Omega} \frac {w(\bx)+i\rho(\bx)}{(x_1+ix_2)^k} d\bx + \oint_{\partial \Omega} \frac{\boldg(\bx)\cdot d\mathbf{l} +  i(\boldg(\bx),\bn) dl }{(x_1+ix_2)^k} = \oint_{\partial \Omega} \frac{\bv_\infty\cdot d\mathbf{l} + i(\bv_\infty,\bn)dl}{(x_1+ix_2)^k},~k \in \NM.\label{intcond}
\end{align}
Here $\oint \cdot d\mathbf{l}$ is a line integral of a vector field, and $\oint \cdot dl$ is a line integral of a scalar field. For constant vector fields $\bv_\infty$ its Fourier series expansion by angle involves only the terms with $k=\pm 1$. So the integration in right-hand side of (\ref{intcond}) over the inner boundary $\oint_{\partial \Omega}$ can be changed to integration over the exterior boundary $\lim_{R\to\infty}\oint_{\vert\bx\vert=R}$. If $\rho =\boldg=\bv_\infty=0$ then these relations are the condition of the orthogonality $w$ to harmonic functions $1/(x_1+ix_2)^k$.

Since the right-hand side of (\ref{intcond}) is equal to 0 for $k=0$ then from Stokes' and Green's formulas follow that with the non-zero circulation or flux at infinity (which generate harmonic fields with infinite kinetic energy), we will have 
\begin{align*}
\int_{\Omega}\left ( w(\bx)+i\rho(\bx) \right )d\bx  + \oint_{\partial \Omega} \boldg(\bx) \cdot d\mathbf{l}
+ i\oint_{\partial \Omega} (\boldg(\bx),\bn)dl \neq 0. 
\end{align*}

In order to obtain finite $L_2$-norm $\|\bv(\cdot) - \bv_\infty\|_{L_2}$ one should additionally impose restrictions on circulation and flux:
\begin{align} \label{intro:zerocirculation}
&\lim_{R\to\infty}\oint_{\vert\bx\vert=R} \bv \cdot d\mathbf{l} = 0,\\
&\lim_{R\to\infty}\oint_{\vert\bx\vert=R} (\bv,\bn) dl = 0, \label{intro:zeroflux}
\end{align}  
or
\begin{align}\label{zerofluxandcirc} \int_{\Omega}\left ( w(\bx)+i\rho(\bx) \right )d\bx  + \oint_{\partial \Omega} \boldg(\bx) \cdot d\mathbf{l}
+ i\oint_{\partial \Omega} (\boldg(\bx),\bn)dl = 0. 
\end{align}

So, for $L_2$-estimate of $\bv(\cdot) - \bv_\infty$ the relations (\ref{intcond}) also must be valid and for $k=0$. For $L_p$-estimate  with $p>2$ the above condition is not necessary. 

For the $L_2$-estimate in the paper we will use weighted space
\begin{align*}L_{2,N}(\Omega) = \{f(x) : 
\|f(\cdot)\|_{L_{2,N}(\Omega)}^2=\int_\Omega |f(x)|^2 (1+|x|^2)^N \mathrm{d}x <\infty\}.\end{align*}

Besides (\ref{nablaest}) we will obtain estimates of the vector field in $L_2$, $L_\infty$, and in the Sobolev space $H^1$. $L_\infty$-estimates play an important role in proving the existence theorems in the non-linear fluid dynamics. It helps to estimate nonlinear terms like $(\bv,\nabla)\bv$, $(\bv,\nabla w)$ in the fixed-point iteration method. 

\section{Div-curl problem in the exterior of the disk}

First, we study (\ref{intro:1})-(\ref{intro:4}) in $B_{r_0}=\{\bx \in \RM^2,~|\bx| > r_0 \},~r_0>0$ with boundary $S_{r_0}=\{\bx \in \RM^2,~|\bx| = r_0 \}$. We construct the solution of (\ref{intro:1}) - (\ref{intro:zerocirculation}) in $B_{r_0}$ as a Fourier series
in polar coordinates $r$, $\varphi$:
\begin{eqnarray}
 &&\bv(r,\varphi) = \sum_{k=-\infty}^\infty \bv_{k}(r)e^{ik\varphi},\nonumber \\ 
  &&\boldg(r,\varphi) = \sum_{k=-\infty}^\infty \boldg_{k}(r)e^{ik\varphi},\nonumber \\ 
 &&\rho(r,\varphi) = \sum_{k=-\infty}^\infty \rho_k(r)e^{ik\varphi}\nonumber, \\
 &&w(r,\varphi) = \sum_{k=-\infty}^\infty w_k(r)e^{ik\varphi}\nonumber.
\end{eqnarray}

Here $\bv(r,\varphi) = (v_r, v_\varphi)$, $\boldg(r,\varphi) = (g_r, g_\varphi)$ - vector fields in polar coordinates, $\bv_k(r) = (v_{r,k}, v_{\varphi,k})$, $\boldg_k(r) = (g_{r,k}, g_{\varphi,k})$, $\rho_k$, $w_k$,  - are the Fourier coefficients.

The Fourier coefficients for exterior flow $\bv_\infty$ are given by the formulas:
\begin{align*}
&v^\infty_{r,k}=\frac{\delta_{\vert k \vert,1}}2 (v^\infty_1 - i k v^\infty_2), \\ 
&v^\infty_{\varphi,k}=\frac{\delta_{\vert k \vert,1}}2 (v^\infty_2 + i k v^\infty_1),
\end{align*} 
and so we have the relation between $v^\infty_{\varphi,k}$ and $iv^\infty_{r,k}$:
\begin{align*} 
v^\infty_{\varphi,k} =  \sign(k) iv^\infty_{r,k}.	
\end{align*}
Here $\delta_{\vert k \vert,1}={\begin{cases}1,\vert k \vert=1,\\0,\vert k \vert \neq 1.\end{cases}}$, $\sign(k) = \begin{cases} 1,~k > 0 \\ 0,~k=0 \\ -1,~k<0 \end{cases}$.

All the Fourier coefficients of the external flow are zero (except $k=\pm 1$). From that follows, that right-hand side in (\ref{intcond}) is equal to zero for all $k\in \NM$ except $k=1$. For horizontal flow $\bv_\infty=(v_\infty,0)$
\begin{align*}
&v^\infty_{r,k}=\frac{\delta_{\vert k \vert,1}}2 v_\infty, \\ 
&v^\infty_{\varphi,k}=i k\frac{\delta_{\vert k \vert,1}}2  v_\infty.
\end{align*}

Equations (\ref{intro:1}), (\ref{intro:2}) in polar coordinates are written as:
\begin{eqnarray*}
&&{\frac {1}{r}}{\frac {\partial }{\partial r}}\left(rv_{r,k}\right)+{\frac {ik}{r}} v_{\varphi,k} = \rho_k,\\
&&{\frac {1}{r}}{\frac {\partial }{\partial r}}\left(rv_{\varphi,k}\right)-{\frac {ik}{r}} v_{r,k} = w_k.
\end{eqnarray*}

Further considerations will be held only for non-negative Fourier coefficient indices $k$. This is because in the case of the real-valued functions $w$, $\rho$, $\boldg$ the Fourier coefficients for negative $k$ satisfy $f_{-k} = \overline{f_k}$ and can be estimated by the same way as for positive $k$. But even in the case of complex-valued functions the subsequent proofs can be easily adopted on negative values of $k$.

For $k \in \NM$ the solution of this system with condition at infinity (\ref{intro:4}) is given by the following formulas:
\begin{eqnarray} \label{intro:BiotSavar1}
&&v_{r,k} =  \frac{ir^{-k-1}}2 \int_{r_0}^r s^{k+1} \left (   w_k(s) - i \rho_k(s) \right)\ds \\ 
&&+   \frac{ir^{k-1}}2 \int_r^\infty s^{-k+1} \left(    w_k(s) + i \rho_k(s) \right)\ds  +  \alpha_k i r^{-k-1} + v^\infty_{r,k}, \nonumber \\ \label{intro:BiotSavar2}
&&v_{\varphi,k} = \frac{r^{-k-1}}2 \int_{r_0}^r s^{k+1}\left (w_k(s) - i \rho_k(s) \right)\ds \\ 
&&- \frac{r^{k-1}}2 \int_r^\infty s^{-k+1}\left(w_k(s) + i \rho_k(s) \right)\ds + \alpha_k   r^{-k-1} + v^\infty_{\varphi,k}.
\nonumber \end{eqnarray}

Multiplying (\ref{intro:BiotSavar2}) by $i$, then substituting $r=r_0$, and after the adding and the subtracting of equations (\ref{intro:BiotSavar1}), (\ref{intro:BiotSavar2}), we will obtain 
$$
\alpha_k = \frac{r_0^{k+1}}{2}\left (g_{\varphi,k}  - i g_{r,k} \right )
$$
and
\begin{align*}
r_0^{k-1} \int_{r_0}^\infty s^{-k+1} \left(    w_k(s) + i \rho_k(s) \right)\ds +  g_{\varphi,k} +i g_{r,k}=v^\infty_{\varphi,k}  + i v^\infty_{r,k}.
\end{align*}

%\begin{align} 
% \int_{\Omega} \frac {w(\bx)+i\rho(\bx)}{(x_1+ix_2)^k} d\bx + \oint_{\partial \Omega} \frac{\boldg(\bx)\cdot d\mathbf{l}}{(x_1+ix_2)^k} \nonumber
%+ i\oint_{\partial \Omega} \frac{(\boldg(\bx),\bn)}{(x_1+ix_2)^k} dl= \\
%4\pi iv^\infty_{r,k}\label{noslipcondintegraldisk}
%\end{align}

The last formula is equivalent to relations (\ref{intcond}) for $k\in \NM$. This is the necessary condition for solvalibility of the Dirichlet div-curl problem. In the next section we extend this result to exterior domain by means of the Riemann mapping theorem.

For $k=0$ from the condition (\ref{intro:3}) we have:
\begin{align*}
&v_{r,0} = \frac 1r \int_{r_0}^r s \rho_0(s)\ds +g_{r,0}/r, \nonumber\\
&v_{\varphi,0} = \frac 1r \int_{r_0}^r s w_0(s)\ds +g_{\varphi,0}/r \nonumber.
\end{align*}

In order to get estimates of $\|\bv - \bv_\infty\|_{L_2}$ we additionally impose zero-circulation (\ref{intro:zerocirculation}) and zero-flux (\ref{intro:zeroflux}) conditions. Then from Stokes' and Green's formulas
\begin{align*}
&\int_{r_0}^\infty s \rho(s)\ds+g_{r,0}=0, \\
&\int_{r_0}^\infty s w(s)\ds+g_{\varphi,0}=0,
\end{align*}

These relations are equivalent to (\ref{zerofluxandcirc}). %Then formulas (\ref{intro:BiotSavar1}), (\ref{intro:BiotSavar2}) stay valid and for $k=0$.

%{\it No-slip boundary condition.} With no-slip boundary condition when $\boldg =0$ 
Formulas (\ref{intro:BiotSavar1}), (\ref{intro:BiotSavar2}) have an integral representation with singular kernels: 
\begin{align}
\bv(\bx) =\frac 1{2\pi} \int_{B_{r_0}} \frac{\bx-\by}{\vert\bx-\by\vert^2} \rho(\by) \operatorname{d\by} +
\frac 1{2\pi} \int_{B_{r_0}} \frac{(\bx-\by)^\perp}{\vert\bx-\by\vert^2} w(\by) \operatorname{d\by} + \nonumber \\
\frac 1{2\pi} \oint_{S_{r_0}} \frac{\bx-\by}{\vert\bx-\by\vert^2} (\boldg,\bn)dl +
\frac 1{2\pi} \oint_{S_{r_0}} \frac{(\bx-\by)^\perp}{\vert\bx-\by\vert^2} (\boldg,\tau)dl + \bv_\infty,\label{intro:BiotSavarInt}
\end{align}
where $\bx^\perp = (-x_2,x_1)$. Here $\tau$ is a unit tangent vector to $S_{r_0}$ in a counter-clockwise direction.

The kernels in the formula above are the gradient $\nabla_\bx$ and the skew gradient $\nabla_\bx^\perp = 
(-\partial_{x_2}, \partial_{x_1})$ of the Green function
\begin{align*}
G(\bx,\by) = \frac 1{2\pi} \ln |\bx-\by|.
\end{align*}

In particular, the formulas (\ref{intro:BiotSavar1}), (\ref{intro:BiotSavar2}) with no-slip condition (\ref{intro:3}) when $\boldg=0$ lead to relations: 
\begin{equation} \label{intro:noslipcondintegral}
\int_{r_0}^\infty s^{-k+1} \left(w_k(s) + i \rho_k(s) \right) \ds = v^\infty_{\varphi,k}  + i v^\infty_{r,k}. 
\end{equation}

These relations are the orthogonality conditions
\begin{align*}
\int_{B_{r_0}} \frac {w(\bx)+i\rho(\bx)}{(x_1+ix_2)^k} d\bx =
\oint_{S_{r_0}} \frac{\bv_\infty\cdot d\mathbf{l}}{(x_1+ix_2)^k} 
+ \nonumber \\ i\oint_{S_{r_0}} \frac{(\bv_\infty,\bn)}{(x_1+ix_2)^k} dl,~k \in \NM \cup \{0\}. 
\end{align*}

\begin{theorem}\label{L2estthmdisk}[$L_2$-estimate of the $\nabla \bv$]  Let $\Omega = B_{r_0}$ be an exterior of the disk, $\rho, w \in L_{2}(B_{r_0})$ and the relations (\ref{intcond}) are satisfied for $k\in \NM \cup \{0\}$. Then the solution $\bv$ of the problem (\ref{intro:1})-(\ref{intro:4}) is given by explicit formula (\ref{intro:BiotSavarInt}) and satisfies the estimate (\ref{nablaest}). 
\end{theorem}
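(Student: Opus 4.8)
The cleanest route, to my mind, is to avoid estimating the Fourier series term by term and instead exploit the pointwise algebraic identity valid for any planar field,
\[
|\nabla\bv|^2=(\divv\bv)^2+(\curl\bv)^2-2\det(\nabla\bv),\qquad \det(\nabla\bv)=\partial_{x_1}v_1\,\partial_{x_2}v_2-\partial_{x_1}v_2\,\partial_{x_2}v_1 .
\]
Integrating over $\Omega$ and inserting (\ref{intro:1})--(\ref{intro:2}) turns the first two terms into $\|\rho\|_{L_2(\Omega)}^2+\|w\|_{L_2(\Omega)}^2$, so the whole estimate collapses to controlling the Jacobian integral $\int_\Omega\det(\nabla\bv)\,\dx$. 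Before doing so I would record the ``existence and representation'' half of the statement: that the field defined by (\ref{intro:BiotSavarInt}) (equivalently by the coefficients (\ref{intro:BiotSavar1})--(\ref{intro:BiotSavar2})) solves (\ref{intro:1})--(\ref{intro:4}), lies in $H^1_{\mathrm{loc}}(\Omega)$, and has the decay of $\bv-\bv_\infty$ and $\nabla\bv$ needed for the manipulations below — this is essentially already delivered by the Fourier derivation preceding the theorem, the compatibility relations (\ref{intcond}) being exactly what makes the coefficients match the data and decay.

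The key observation is that the Jacobian integral is a pure boundary quantity depending only on $\boldg$. Since $\det(\nabla\bv)\,dx_1\wedge dx_2=dv_1\wedge dv_2=d(v_1\,dv_2)$, Stokes' theorem on the annulus $r_0<|\bx|<R$ gives $\int\det(\nabla\bv)\,\dx=\oint v_1\,dv_2$ taken over the two bounding circles. On $S_{r_0}$ one has $v_1=g_1$ and $dv_2=\partial_\tau g_2\,dl$, so this contribution equals $\oint_{S_{r_0}} g_1\,\partial_\tau g_2\,dl$ (up to orientation) and involves only the Dirichlet data. The outer circle $|\bx|=R$ must then drop out as $R\to\infty$: writing $v_1=v^\infty_1+(v_1-v^\infty_1)$, the constant part integrates to zero because $v_2$ is single-valued, while the remainder vanishes thanks to the decay of $\bv-\bv_\infty$ and $\nabla\bv$ secured by the explicit formula together with (\ref{intcond}), in particular the $k=0$ relation (\ref{zerofluxandcirc}) that annihilates the slowly decaying $1/|\bx|$ harmonic fields.

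This passage to the limit at infinity — together with the a priori justification that $\bv$ is regular and integrable enough for the identity and Stokes' theorem to be legitimate on the unbounded domain — is the step I expect to be the main obstacle; everything else is soft. The remaining boundary bound is routine duality: $g_2\in H^{1/2}(S_{r_0})$ gives $\partial_\tau g_2\in H^{-1/2}(S_{r_0})$, and since $g_1\in H^{1/2}(S_{r_0})$ pairs with $H^{-1/2}$, one gets $\bigl|\oint_{S_{r_0}}g_1\,\partial_\tau g_2\,dl\bigr|\le C\|\boldg\|_{H^{1/2}(\partial\Omega)}^2$. Combining,
\[
\int_\Omega|\nabla\bv|^2\,\dx\le \|\rho\|_{L_2(\Omega)}^2+\|w\|_{L_2(\Omega)}^2+C\|\boldg\|_{H^{1/2}(\partial\Omega)}^2,
\]
and taking square roots, with $\sqrt{a^2+b^2+c^2}\le a+b+c$ for $a,b,c\ge 0$, yields (\ref{nablaest}).

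As an alternative fully aligned with the Fourier framework of this section, one may instead substitute (\ref{intro:BiotSavar1})--(\ref{intro:BiotSavar2}) into the Parseval identity
\[
\|\nabla\bv\|_{L_2(\Omega)}^2=2\pi\sum_{k}\int_{r_0}^\infty\Bigl(|\partial_r v_{r,k}|^2+|\partial_r v_{\varphi,k}|^2+\tfrac1{r^2}|ikv_{r,k}-v_{\varphi,k}|^2+\tfrac1{r^2}|ikv_{\varphi,k}+v_{r,k}|^2\Bigr)r\,dr,
\]
bounding each radial integral by $\|w_k\|_{L_2}^2+\|\rho_k\|_{L_2}^2$ and handling the $\alpha_k$ and $\bv_\infty$ contributions through Hardy-type inequalities; there the delicate point is obtaining a Hardy constant uniform in $k$ so that the final summation over $k$ reproduces $\|w\|_{L_2}^2+\|\rho\|_{L_2}^2$ and $\|\boldg\|_{H^{1/2}}^2$. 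I would present the identity-based argument as the primary proof and use the Fourier computation only to certify the decay at infinity invoked above.
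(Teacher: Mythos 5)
Your identity $|\nabla\bv|^2=(\divv\bv)^2+(\curl\bv)^2-2\det(\nabla\bv)$ and the reduction to the Jacobian boundary integral are fine in principle, but the step you yourself flag as the main obstacle --- the vanishing of the outer-circle contribution $\oint_{|\bx|=R}(v_1-v_1^\infty)\,\partial_\tau v_2\,dl$ as $R\to\infty$ --- is exactly where the argument has a genuine gap, and under the hypotheses of the theorem it is not merely technical. The theorem assumes only $\rho,w\in L_2(B_{r_0})$, with no weight. In that setting $\bv-\bv_\infty$ has no quantitative pointwise decay and need not even be square integrable: in the paper, $\|\bv-\bv_\infty\|_{L_2}$ is obtained only in the next theorem under the stronger hypothesis $\rho,w\in L_{2,N}$ with $N>1$. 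The relations (\ref{intcond}) kill the $1/|\bx|$ harmonic fields, but they give no decay rate for merely $L_2$ data; e.g.\ Cauchy--Schwarz applied to $r^{-k-1}\int_{r_0}^r s^{k+1}w_k(s)\,ds$ yields only an $O(1)$ bound. So the claim that the remainder on $|\bx|=R$ vanishes ``thanks to the decay secured by the explicit formula together with (\ref{intcond})'' is an assertion, not a proof. There is also a circularity to resolve: $\det(\nabla\bv)\in L_1(\Omega)$ and the legitimacy of the Stokes argument on the unbounded domain presuppose $\nabla\bv\in L_2$, which is what you are proving; running the identity on annuli $r_0<|\bx|<R$ is the right fix, but then the whole burden falls on controlling the outer boundary term along a sequence $R_j\to\infty$, and that control is precisely what is missing. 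Without it, the central estimate does not close.

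For comparison, the paper sidesteps the behavior at infinity entirely: it splits $\bv=\bv_1+\bv_2+\bv_\infty$, bounds $\nabla\bv_1$ (the part driven by $\rho$ and $w$ in (\ref{intro:BiotSavarInt})) by Calder\'on--Zygmund theory, giving (\ref{bsest2}), and bounds the boundary part $\alpha_k r^{-k-1}$ by a direct computation of $\|\frac{d}{dr}(\alpha_kr^{-k-1})\|$ and $\|k\alpha_kr^{-k-1}\|$ in $L_2(r_0,\infty;r)$, whose sum over $k$ is exactly the $H^{1/2}(S_{r_0})$ norm of $\boldg$. Your second, Parseval-based sketch is closer in spirit to the paper's treatment of the boundary terms, but the paper does not attempt term-by-term Hardy inequalities for the volume part, precisely because the Calder\'on--Zygmund bound handles it uniformly. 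If you want to salvage the identity-based route, you would need either the weighted hypothesis $\rho,w\in L_{2,N}$, $N>1$ (so that $\bv-\bv_\infty\in L_2$ and a good sequence of radii can be extracted), or an independent argument for the outer term; as written, the proof is incomplete.
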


\begin{proof}

The formulas (\ref{intro:BiotSavar1}), (\ref{intro:BiotSavar2}) decompose the velocity on the three components
$$
\bv = \bv_1 + \bv_2 + \bv_\infty,
$$
where $\bv_1$ depends on $w$ and $\rho$, when $\bv_2$ depends on boundary condition $\boldg$.

The singular integrals involved in formula (\ref{intro:BiotSavarInt}) are the operators of Calder\'on-Zygmund type (see \cite{Calderon}) and for $p >1$ (and in particular for $p=2$)  
\begin{align}\label{bsest2}
\|\nabla \bv_1(\cdot)\|_{L_p} \leq C \left ( \| \rho \|_{L_p} + \| w \|_{L_p} \right ). 
\end{align} 

From the Hardy-Littlewood-Sobolev inequality for $1<p<2<q<\infty$, satisfying $\frac 1q = \frac 1p - \frac 12$, the following estimate holds with some $C>0$ (see \cite{Stein}):
\begin{equation}\label{intro:LpLqest}
 \|\bv_1 - \bv_\infty\|_{L_q} \leq C
\left ( \|\rho \|_{L_p} + \| w \|_{L_p} \right ) \nonumber.
\end{equation}

%From (\ref{intro:LpLqest}) the term $\nabla \bv$ in (\ref{intro:BiotSavar1}), (\ref{intro:BiotSavar2}) with $\boldg =0$ satisfies $L_2$ estimate
%\begin{align*}
%\|\nabla \bv_1(\cdot)\|_{L_2} \leq C \left ( \| \rho \|_{L_2} + \| w \|_{L_2} \right ). 
%\end{align*}  

Now we estimate the rest term in (\ref{intro:BiotSavar1}), (\ref{intro:BiotSavar2}), which defines $\bv_2$ and involves the boundary function $\boldg$. We will use the space $L_{2}(r_0,\infty; r)$, which is associated with $L_{2}(B_{r_0})$, and is defined as
\begin{align*}L_{2}(r_0,\infty; r) = \{f(r) : 
\|f(\cdot)\|_{L_{2}(r_0,\infty; r)}^2=\int_{r_0}^\infty |f(r)|^2 r \mathrm{d}r <\infty\}.
\end{align*}

The gradient $\nabla \bv_2$ involves such terms as $\frac d{dr}v_{r,k}$, $\frac d{dr}v_{\varphi,k}$, $ikv_{r,k}$, $ikv_{\varphi,k}$. Then we have
$$
\left \| \frac d{dr} \left (\alpha_k r^{-k-1} \right ) \right \|^2_{L_2(r_0,\infty; r)} = \frac {|\alpha_k|^2 (k+1)}{2r_0^{2k+2}} \leq  \frac {k+1} 4 \left (|g_{\varphi,k}|^2 + |g_{r,k}|^2 \right )
$$
and
$$
\left \| ik \alpha_k r^{-k-1}  \right \|^2_{L_2(r_0,\infty; r)} = \frac {|\alpha_k|^2 k}{2r_0^{2k}} \leq  \frac {k r_0^2} 4 \left (|g_{\varphi,k}|^2 + |g_{r,k}|^2 \right ).
$$

Define $H^{1/2}(S_{r_0})$ as the completion of smooth functions by the norm:
$$
\|\boldg(\bx)\|^2_{H^{1/2}(S_{r_0})} = \sum_{k=-\infty}^\infty (|k|+1) \left (|g_{\varphi,k}|^2 + |g_{r,k}|^2 \right ).
$$

Then 
$$
\sum_{k=-\infty}^\infty \|\alpha_k r^{-|k|+1}\|^2_{L_2(r_0,\infty; r)} \leq    C \|\boldg(\bx)\|^2_{H^{1/2}(S_{r_0})}
$$
with some $C=C(r_0)$.
\end{proof}

Now we are ready to derive the energy estimate.

%The following Theorem says, that under the assumption of zero flux and circularity the $L_2$-norm of $\bv(\cdot) - \bv_\infty$ is finite.
\begin{theorem}[$L_2$-estimates of the velocity]\label{intro:propl2est}
Suppose that the orthogonality conditions (\ref{intcond}) are satisfied for $k\in \NM \cup \{0\}$ and $\rho, w\in L_{2,N}(B_{r_0})$ with $N>1$. Then there exists the unique solution of the div-curl problem (\ref{intro:1}) - (\ref{intro:4}) in the exterior of the disk $B_{r_0}$, which is given by explicit formula (\ref{intro:BiotSavarInt}), and with some $C=C(r_0,N)$ the following estimate holds:
\begin{align} \label{vesinh1t}
\| \bv(\cdot) - \bv_\infty \|_{H^1(B_{r_0})} \leq C \left (\| \rho \|_{L_{2,N}(B_{r_0})} + \| w \|_{L_{2,N}(B_{r_0})}  + \|\boldg(\bx)\|_{H^{1/2}(S_{r_0})}\right ).
\end{align} 
\end{theorem}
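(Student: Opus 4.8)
The plan is to split the $H^1$-norm as $\|\bv - \bv_\infty\|_{H^1(B_{r_0})}^2 = \|\bv - \bv_\infty\|_{L_2(B_{r_0})}^2 + \|\nabla \bv\|_{L_2(B_{r_0})}^2$ and to observe that the gradient part is already supplied by Theorem~\ref{L2estthmdisk}. Indeed, since $(1+|\bx|^2)^N \geq 1$ for $N>0$ we have the embedding $L_{2,N}(B_{r_0}) \inject L_2(B_{r_0})$, so (\ref{nablaest}) immediately gives $\|\nabla \bv\|_{L_2} \leq C(\|\rho\|_{L_{2,N}} + \|w\|_{L_{2,N}} + \|\boldg\|_{H^{1/2}})$. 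Everything therefore reduces to the $L_2$-estimate of $\bv - \bv_\infty = \bv_1 + \bv_2$, which I carry out mode by mode in the associated weighted space $L_2(r_0,\infty;r)$ and then reassemble through Parseval's identity (in the form $\|w\|_{L_{2,N}(B_{r_0})}^2 = 2\pi\sum_k \|w_k\|_{L_{2,N}(r_0,\infty;r)}^2$).

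For the source field $\bv_1$ and a fixed mode $k\geq 1$, I bound the three terms of (\ref{intro:BiotSavar1})--(\ref{intro:BiotSavar2}) by applying the Cauchy--Schwarz inequality against the weight $(1+s^2)^N$. For the tail term, using $\int_r^\infty s^{-2k+1}(1+s^2)^{-N}\,\ds \lesssim r^{-2k+2-2N}$, the growing prefactor $r^{k-1}$ is exactly compensated, and the term is bounded pointwise by $C\,r^{-N}(\|w_k\|_{L_{2,N}} + \|\rho_k\|_{L_{2,N}})$; an analogous estimate holds for the term carrying $\int_{r_0}^r s^{k+1}(\cdots)\,\ds$ against the prefactor $r^{-k-1}$. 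Since $\int_{r_0}^\infty r^{-2N}\,r\,\dr < \infty$ \emph{precisely} when $N>1$, each mode lies in $L_2(r_0,\infty;r)$ with a constant that stays uniformly bounded in $k$, so summation and Parseval yield $\|\bv_1-\bv_\infty\|_{L_2}\leq C(\|\rho\|_{L_{2,N}} + \|w\|_{L_{2,N}})$ over the non-zero modes.

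The crux is the zero mode $k=0$, where the formulas for $v_{r,0}, v_{\varphi,0}$ carry the slowly decaying harmonic fields of order $1/r$ that fail to be square-integrable in two dimensions. Here I use that the $k=0$ instance of (\ref{intcond}) coincides with (\ref{zerofluxandcirc}), i.e. with the zero-flux and zero-circulation relations $\int_{r_0}^\infty s\rho_0(s)\,\ds + g_{r,0}=0$ and $\int_{r_0}^\infty s w_0(s)\,\ds + g_{\varphi,0}=0$. These absorb the boundary constants and rewrite the zero mode as pure tails $v_{r,0} = -\frac1r\int_r^\infty s\rho_0(s)\,\ds$ and $v_{\varphi,0} = -\frac1r\int_r^\infty s w_0(s)\,\ds$. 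Estimating the tail by Cauchy--Schwarz with the weight and $\int_r^\infty s(1+s^2)^{-N}\,\ds = \frac{1}{2(N-1)}(1+r^2)^{-(N-1)}$ (finite exactly for $N>1$) gives $|v_{r,0}| \leq C\,r^{-1}(1+r^2)^{-(N-1)/2}\|\rho_0\|_{L_{2,N}}$, whose $L_2(r_0,\infty;r)$-norm is finite because $\int_{r_0}^\infty r^{-1}(1+r^2)^{-(N-1)}\,\dr<\infty$ for $N>1$. This is the single step where the $k=0$ orthogonality condition and the threshold $N>1$ are both indispensable, and it is the main obstacle of the proof.

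Finally, for the boundary field $\bv_2$ the relevant contributions are the terms $\alpha_k r^{-|k|-1}$ with $\alpha_k = \tfrac{r_0^{k+1}}{2}(g_{\varphi,k}-ig_{r,k})$; computing $\|\alpha_k r^{-k-1}\|_{L_2(r_0,\infty;r)}^2 \lesssim r_0^2\,k^{-1}(|g_{\varphi,k}|^2+|g_{r,k}|^2)$ for $k\geq 1$ and summing against $\|\boldg\|_{H^{1/2}(S_{r_0})}^2 = \sum_k(|k|+1)(|g_{\varphi,k}|^2+|g_{r,k}|^2)$ gives $\|\bv_2\|_{L_2}\leq C\|\boldg\|_{H^{1/2}(S_{r_0})}$, exactly as the gradients of these terms were handled in Theorem~\ref{L2estthmdisk}. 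Collecting the three contributions yields the $L_2$-bound for $\bv-\bv_\infty$ and hence (\ref{vesinh1t}). Existence and the representation (\ref{intro:BiotSavarInt}) follow from the convergence of the Fourier series just estimated, while uniqueness holds in the finite-energy class, since the difference of two solutions is a divergence- and curl-free field with vanishing boundary data whose only modes are the infinite-energy harmonic fields, excluded once $\bv-\bv_\infty\in L_2$.
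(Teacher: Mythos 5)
Your proposal is correct and follows essentially the same route as the paper: mode-by-mode Cauchy--Schwarz estimates against the weight $(1+s^2)^N$, the $k=0$ orthogonality condition used to rewrite the zero mode as pure tail integrals (the only place where $N>1$ and (\ref{zerofluxandcirc}) are truly needed), the boundary terms $\alpha_k r^{-k-1}$ summed against the $H^{1/2}(S_{r_0})$ norm, Parseval, and finally combination with the gradient estimate of Theorem~\ref{L2estthmdisk}. The only detail you gloss over is the borderline mode $k=N-1$ (when $N-1$ is an integer), where the exponent in the Cauchy--Schwarz integral for the $\int_{r_0}^{r}$ term hits $-1$ and a logarithmic factor appears, which the paper treats separately; this does not affect the conclusion since the prefactor $r^{-k-1}$ still yields square integrability.
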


\begin{proof}
Note, that from (\ref{intcond}) with $k=0$ follows that the conditions (\ref{intro:zerocirculation}), (\ref{intro:zeroflux}) are satisfied. Existence follows from the explicit formulas (\ref{intro:BiotSavar1}), (\ref{intro:BiotSavar2}), which have an integral repsentation  (\ref{intro:BiotSavarInt}). Uniqueness follows from the Hodge theory since the circularity and the flux around circle are fixed.

Now we prove (\ref{vesinh1t}). We estimate the first term in (\ref{intro:BiotSavar1}), (\ref{intro:BiotSavar2}) involving $w$ for $k \neq 0$, $k \neq N-1$:
\begin{align*}
&\left | r^{-\vert k \vert-1} \int_{r_0}^r s^{\vert k \vert+1}w_k(s)\ds \right | =  \left | r^{-\vert k \vert-1} \int_{r_0}^r \frac{s^{\vert k \vert+\frac 12}}{(1+s)^N}w_k(s) (1+s)^N \sqrt s\ds \right |\leq 
\\
&\left | r^{-\vert k \vert-1} \int_{r_0}^r s^{\vert k \vert+\frac 12 - N}w_k(s) (1+s)^N \sqrt s\ds \right |\leq 
\\
&r^{-\vert k \vert-1} \sqrt{\int_{r_0}^r s^{\vert 2k \vert+1 - 2N}\ds} 
  \sqrt{\int_{r_0}^r w^2_k(s) (1+s^2)^N s\ds}  \leq 
  \\
  & \frac C{\sqrt{|2k-2N+2|}}
 \|w_k(\cdot)\|_{L_{2,N}(r_0,\infty; r)} \left ( \frac 1 {r^N} + \left ( \frac{r_0}{r} \right )^{k+1} \cdot  \frac 1{r_0^N} \right ).
\end{align*}
Here $L_{2,N}(r_0,\infty; r)$ is the weighted space of functions (with the weight $(1+|r|^2)^N r$) which is related to the space $L_{2,N}(B_{r_0})$:
\begin{align*}L_{2,N}(r_0,\infty; r) = \{f(r) : 
\|f(\cdot)\|_{L_{2,N}(r_0,\infty; r)}^2=\int_{r_0}^\infty |f(r)|^2 (1+|r|^2)^N r \mathrm{d}r <\infty\}.\end{align*}

Then we have $L_2$-estimate:
\begin{align*}
&\int_{r_0}^\infty \left | r^{-\vert k \vert-1} \int_{r_0}^r s^{\vert k \vert+1}w_k(s)\ds \right |^2 rdr \leq 
  \\
  & \int_{r_0}^\infty \frac {C^2}{|2k-2N+2|}
 \|w_k(\cdot)\|^2_{L_{2,N}(r_0,\infty; r)} \left ( \frac 1 {r^N} + \left ( \frac{r_0}{r} \right )^{k+1} \cdot  \frac 1{r_0^N} \right )^2rdr \leq \\
 &\frac {C^2}{|k-N+1|}
 \|w_k(\cdot)\|^2_{L_{2,N}(r_0,\infty; r)} \left ( \frac {r_0^{-2N+2}}{2N-2} + \frac{r_0^{2-2N}}{2k} \right ).
\end{align*}

For $k=N-1$:
\begin{align*}
&\left | r^{-\vert k \vert-1} \int_{r_0}^r s^{\vert k \vert+1}w_k(s)\ds \right | =  \left | r^{-\vert k \vert-1} \int_{r_0}^r \frac{s^{\vert k \vert+\frac 12}}{(1+s)^N}w_k(s) (1+s)^N \sqrt s\ds \right |\leq 
\\
&\left | r^{-\vert k \vert-1} \int_{r_0}^r s^{\vert k \vert+\frac 12 - N}w_k(s) (1+s)^N \sqrt s\ds \right |\leq 
\\
&r^{-\vert k \vert-1} \sqrt{\int_{r_0}^r s^{-1}\ds} \|w_k(\cdot)\|_{L_{2,N}(r_0,\infty; r)}   \leq 
   \frac{ \sqrt {\ln r - \ln r_0}}{r^{k+1}}  \|w_k(\cdot)\|_{L_{2,N}(r_0,\infty; r)}.
\end{align*}

A similar estimate is valid for $\rho$. And so, the first term in (\ref{intro:BiotSavar1}), (\ref{intro:BiotSavar2}) for  $k\neq 0$ belongs to $L_2(r_0,\infty; r)$.

Second term in (\ref{intro:BiotSavar1}), (\ref{intro:BiotSavar2}) we estimate in a similar way: 
\begin{align*}
&\left | r^{\vert k \vert-1} \int_r^\infty s^{-\vert k \vert+1}w_k(s)\ds \right | =  \left | r^{\vert k \vert-1} \int_r^\infty \frac{s^{-\vert k \vert+\frac 12}}{(1+s)^N}w_k(s) (1+s)^N \sqrt s\ds \right |\leq 
\\
&\left | r^{\vert k \vert-1} \int_r^\infty s^{-\vert k \vert+\frac 12 - N}w_k(s) (1+s)^N \sqrt s\ds \right |\leq 
\\
&r^{\vert k \vert-1} \sqrt{\int_r^\infty s^{-\vert 2k \vert+1 - 2N}\ds} 
  \sqrt{\int_r^\infty w^2_k(s) (1+s^2)^N s\ds}  \leq 
  \\
  & \frac 1{r^N \sqrt{|2k+2N-2|}}
 \|w_k(\cdot)\|_{L_{2,N}(r_0,\infty; r)}. 
\end{align*}

Now for $k\neq 0$ we estimate the term in (\ref{intro:BiotSavar1}), (\ref{intro:BiotSavar2}), which involves $\boldg$. We will have
$$
\left \| \alpha_k r^{-k-1} \right \|^2_{L_2(r_0,\infty; r)} = \frac {|\alpha_k|^2 }{2kr_0^{2k}} \leq  \frac {r_0^2} {4 k} \left (|g_{\varphi,k}|^2 + |g_{r,k}|^2 \right ).
$$
Summarizing by $k \neq 0$ we obtain $\|\boldg(\bx)\|^2_{H^{-1/2}(S_{r_0})}$ in right hand side which is majored by $H^{1/2}$ norm.

So, for $k\neq 0$ it also belongs to $L_2(r_0,\infty; r)$. Lets study the case $k=0$. We have
\begin{align*}
v_{\varphi,0} = \frac 1r \int_{r_0}^r s w_0(s)\ds   + \frac{g_{\varphi,0}}r = -\frac 1r \int_r^\infty s w_0(s)\ds,
\end{align*}
and
\begin{align*}
&|v_{\varphi,0}| =  \left | \frac 1r \int_r^\infty s w_0(s)\ds \right | \leq \\
&\left | r^{-1} \int_r^\infty s^{\frac 12 - N}w_0(s) (1+s)^N \sqrt s\ds \right |\leq 
\\
&r^{-1} \sqrt{\int_r^\infty s^{1 - 2N}\ds} 
  \sqrt{\int_r^\infty w^2_0(s) (1+s^2)^N s\ds}  \leq 
  \\
  & \frac 1{r^N \sqrt{2N-2}}
 \|w_0(\cdot)\|_{L_{2,N}(r_0,\infty; r)}. 
\end{align*}

In a similar way we will have
\begin{align*}
|v_{r,0}|   \leq  \frac 1{r^N \sqrt{2N-2}}
 \|\rho_0(\cdot)\|_{L_{2,N}(r_0,\infty; r)}. 
\end{align*}

So, $v_{r,0}$, $v_{\varphi,0}$ will belong to $L_2(r_0,\infty; r)$. Then, summing by $k$ the estimates of $v_{r,k}$, $v_{\varphi,k}$, given in (\ref{intro:BiotSavar1}), (\ref{intro:BiotSavar2}), and using Parseval's equality for Fourier series, we obtain the inequality with some $C=C(r_0,N)$: 
\begin{align*}
\| \bv(\cdot) - \bv_\infty \|_{L_2(B_{r_0})}\leq C \left (\| \rho \|_{L_{2,N}(B_{r_0})} + \| w \|_{L_{2,N}(B_{r_0})} + \|\boldg(\bx)\|_{H^{-1/2}(S_{r_0})}\right ).
\end{align*}

Finally, in virtue of the inequality (\ref{nablaest}) we obtain the required esti\-mate (\ref{vesinh1t}).
\end{proof}

\section{Div-curl problem in the exterior planar domain}For further consideration we need some additional requirements on $\Omega$. Suppose that there exists a Riemann mapping $\Phi$ from $\Omega$ into $B_{r_0}$ such that in complex form it can be written as
\begin{align*}
\Phi(p)=p+O\left (\frac 1p \right ),
\end{align*}
where $p=y_1+iy_2 \in \CM$. 

We will use $\Phi$ as a mapping from $\CM$ to $\CM$ as well as from $\RM^2$ to $\RM^2$. And also suppose that for $z=x_1+ix_2 \in \CM$ there exists the inverse transform $\Phi^{-1}(z) : B_{r_0} \to \Omega$, which satisfies
\begin{align}\label{intro:Phiprop}
\Phi^{-1}(z)=z+O\left (\frac 1z \right ),
\end{align}
and
\begin{align}\label{intro:Phiprop2}
\left(\Phi^{-1}\right )'(z)=1+O\left (\frac 1{z^2} \right ).
\end{align}

We change the variables $\by=(y_1, y_2) \in \Omega$ onto $\bx=(x_1, x_2) \in B_{r_0}$ in the system (\ref{intro:1}), (\ref{intro:2}). Vector field $\bv(p)$ in $\Omega$ with $p=y_1+iy_2$ defines the vector field $\hat \bv(\bx)$ in $B_{r_0}$: 
\begin{align*}
\bv(\by) =  D\Phi^t \hat \bv(\bx(\by)),
\end{align*}
where $D\Phi^t$ is the transpose matrix to Jacobian matrix $D\Phi$
\begin{align*}
D\Phi = \begin{pmatrix}
&\real \Phi'_{x_1} &\real \Phi'_{x_2} \\ &\imag \Phi'_{x_1} &\imag \Phi'_{x_2}
\end{pmatrix} = \begin{pmatrix}
&\real \Phi'_{x_1} &-\imag \Phi'_{x_1} \\ &\imag \Phi'_{x_1} &\real \Phi'_{x_1}
\end{pmatrix}.
\end{align*}

Then
\begin{align}\label{intro:covariantv}
&\hat \bv(\bx)={D\Phi^{-1}}^t \bv(\Phi^{-1}(\bx))=\\&{D\Phi^{-1}}^t \bv(\real \Phi^{-1}(x_1+ix_2), \imag \Phi^{-1}(x_1+ix_2)) \nonumber
\end{align}
is the vector field in $B_{r_0}$. And $\hat \rho(\bx) = \rho(\Phi^{-1}(\bx))$, $\hat w(\bx) = w(\Phi^{-1}(\bx))$ are the divergence  and vorticity functions in $B_{r_0}$ correspondingly, when 
$$\hat \boldg(\bx) = {D\Phi^{-1}}^t \boldg(\Phi^{-1}(\bx))$$ 
defines the boundary condition (\ref{intro:3}) on the circle $S_{r_0}$.

From the Cauchy-Riemann relations
\begin{align*}
&\frac{\partial x_1}{\partial y_1} = \frac{\partial x_2}{\partial y_2}, \\
&\frac{\partial x_2}{\partial y_1} = - \frac{\partial x_1}{\partial y_2}
\end{align*}
we will have
\begin{align*}
\divv_{\by} \bv(\by) = \frac{\partial v_1}{\partial y_1}+\frac{\partial v_2}{\partial y_2}=
 \frac{\partial v_1}{\partial x_1} \frac{\partial x_1}{\partial y_1}+
 \frac{\partial v_1}{\partial x_2} \frac{\partial x_2}{\partial y_1}+\frac{\partial v_2}{\partial x_1} \frac{\partial x_1}{\partial y_2}+
 \frac{\partial v_2}{\partial x_2} \frac{\partial x_2}{\partial y_2}\\
 =\frac{\partial v_1}{\partial x_1} \frac{\partial x_1}{\partial y_1}+
 \frac{\partial v_1}{\partial x_2} \frac{\partial x_2}{\partial y_1}+\frac{\partial v_2}{\partial x_1} \frac{\partial x_1}{\partial y_2}+
 \frac{\partial v_2}{\partial x_2} \frac{\partial x_2}{\partial y_2} =\frac{\partial x_1}{\partial y_1}\left ( \frac{\partial v_1}{\partial x_1}+\frac{\partial v_2}{\partial x_2} \right )\\-
 \frac{\partial x_2}{\partial y_1} \left (\frac{\partial v_2}{\partial x_1}  - \frac{\partial v_1}{\partial x_2} \right )=
 \frac{\partial x_1}{\partial y_1} \divv_{\bx} \bv(\bx) -
 \frac{\partial x_2}{\partial y_1} \curl_{\bx} \bv(\bx),
\end{align*}
and
\begin{align*}
\curl_{\by} \bv(\by) = \frac{\partial v_2}{\partial y_1} - \frac{\partial v_1}{\partial y_2}=
 \frac{\partial v_2}{\partial x_1} \frac{\partial x_1}{\partial y_1}+
 \frac{\partial v_2}{\partial x_2} \frac{\partial x_2}{\partial y_1}-\frac{\partial v_1}{\partial x_1} \frac{\partial x_1}{\partial y_2}-
 \frac{\partial v_1}{\partial x_2} \frac{\partial x_2}{\partial y_2}\\
 =
 \frac{\partial x_1}{\partial y_1} \curl_{\bx} \bv(\bx) +
 \frac{\partial x_2}{\partial y_1} \divv_{\bx} \bv(\bx). 
\end{align*}

Finally, the system (\ref{intro:1}), (\ref{intro:2}) goes to
\begin{align*}
&\frac{\partial x_1}{\partial y_1} \divv_{\bx} \bv(\bx)- 
\frac{\partial x_2}{\partial y_1} \curl_{\bx} \bv(\bx)= \hat \rho(\bx), \\
&\frac{\partial x_2}{\partial y_1} \divv_{\bx} \bv(\bx) + 
\frac{\partial x_1}{\partial y_1} \curl_{\bx} \bv(\bx) = \hat w(\bx). 
\end{align*}

Then
\begin{align*}
\divv_{\bx} \bv(\bx) = 
 \frac {\frac{\partial x_2}{\partial y_2} }{
\left ( \frac{\partial x_1}{\partial y_1} \right )^2 +\left (\frac{\partial x_1}{\partial y_2} \right )^2}\hat \rho(\bx) +
\frac {\frac{\partial x_2}{\partial y_1} }{
\left ( \frac{\partial x_1}{\partial y_1} \right )^2 +\left (\frac{\partial x_1}{\partial y_2} \right )^2}\hat w(\bx), \\
\curl_{\bx} \bv(\bx) = -\frac {\frac{\partial x_2}{\partial y_1} }{
\left ( \frac{\partial x_1}{\partial y_1} \right )^2 +\left (\frac{\partial x_1}{\partial y_2} \right )^2}\hat \rho(\bx)
+ \frac {\frac{\partial x_1}{\partial y_1} }{
\left ( \frac{\partial x_1}{\partial y_1} \right )^2 +\left (\frac{\partial x_1}{\partial y_2} \right )^2}\hat w(\bx).
\end{align*}

Using the representation of the complex derivative $\Phi'(p)$:
\begin{align*}
\Phi'(p) = \frac{\partial x_1}{\partial y_1} + i \frac{\partial x_2}{\partial y_1}  = \frac{\partial x_2}{\partial y_2} - i \frac{\partial x_1}{\partial y_2}
\end{align*}
we will have
\begin{equation*}
\curl \bv_{\bx}  + i \divv \bv_{\bx} = \frac {\Phi'(p)}{\vert \Phi'(p)\vert ^2} \hat w +
i \overline{ \frac {\Phi'(p)}{\vert \Phi'(p)\vert ^2} } \hat \rho.
\end{equation*}

Now we calculate the divergence of the field $\hat \bv = {D\Phi^{-1}}^t \bv(\Phi^{-1}(\bx))$. Using Jacobian matrix
\begin{align*}
D\Phi^{-1} = \begin{pmatrix}
&\frac{\partial y_1}{\partial x_1} &-\frac{\partial y_2}{\partial x_1} \\ &\frac{\partial y_2}{\partial x_1} &\frac{\partial y_1}{\partial x_1}
\end{pmatrix} = \frac 1{|\Phi'|^2} \begin{pmatrix}
&\frac{\partial x_1}{\partial y_1} &\frac{\partial x_2}{\partial y_1} \\ &-\frac{\partial x_2}{\partial y_1} &\frac{\partial x_1}{\partial y_1}
\end{pmatrix}
\end{align*}
and its transpose 
\begin{align*}
{D\Phi^{-1}}^t = \begin{pmatrix}
&\frac{\partial y_1}{\partial x_1} &\frac{\partial y_2}{\partial x_1} \\ &-\frac{\partial y_2}{\partial x_1} &\frac{\partial y_1}{\partial x_1}
\end{pmatrix} = \frac 1{|\Phi'|^2} \begin{pmatrix}
&\frac{\partial x_1}{\partial y_1} & -\frac{\partial x_2}{\partial y_1} \\ &\frac{\partial x_2}{\partial y_1} &\frac{\partial x_1}{\partial y_1}
\end{pmatrix}
\end{align*}
we will have
\begin{align*}
&\divv_{\bx} \hat \bv(\bx) = \frac{\partial y_1}{\partial x_1} \divv_{\bx} \bv + \frac{\partial y_2}{\partial x_1} \curl_{\bx} \bv = \\
&\frac 1{|\Phi'|^2} \left (
\frac{\partial y_1}{\partial x_1} \frac{\partial x_2}{\partial y_1}+
\frac{\partial y_2}{\partial x_1} \frac{\partial x_1}{\partial y_1} \right ) \hat w + 
\frac 1{|\Phi'|^2} \left (
\frac{\partial y_1}{\partial x_1} \frac{\partial x_2}{\partial y_2}+
\frac{\partial y_2}{\partial x_1} \frac{\partial x_2}{\partial y_1} \right ) \hat \rho=\\
&\left (
\left ( \frac{\partial y_1}{\partial x_1} \right )^2 +
\left ( \frac{\partial y_2}{\partial x_1} \right )^2 \right ) \hat \rho = |{\Phi^{-1}}'|^2 \hat \rho
\end{align*}
and
\begin{align*}
&\curl_{\bx} \hat \bv(\bx) =  \frac{\partial y_1}{\partial x_1} \curl_{\bx} \bv - \frac{\partial y_2}{\partial x_1} \divv_{\bx} \bv = \\
&\frac 1{|\Phi'|^2} \left (
\frac{\partial y_1}{\partial x_1} \frac{\partial x_1}{\partial y_1}-
\frac{\partial y_2}{\partial x_1} \frac{\partial x_2}{\partial y_1} \right ) \hat w+
\frac 1{|\Phi'|^2} \left (
-\frac{\partial y_1}{\partial x_1} \frac{\partial x_2}{\partial y_1}-
\frac{\partial y_2}{\partial x_1} \frac{\partial x_2}{\partial y_2} \right ) \hat \rho=\\
&\left (
\left ( \frac{\partial y_1}{\partial x_1} \right )^2 +
\left ( \frac{\partial y_2}{\partial x_1} \right )^2 \right ) \hat w = |{\Phi^{-1}}'|^2 \hat w.
\end{align*}

Then the system (\ref{intro:1})-(\ref{intro:4}) transfers to new one, defined in $B_{r_0}$:
\begin{eqnarray}
&&\divv~ \hat \bv(\bx) = |{\Phi^{-1}}'|^2 \hat \rho(\bx), \label{intro:freediv3}\\
&&\curl~ \hat \bv(\bx) = |{\Phi^{-1}}'|^2 \hat w(\bx),  \label{intro:curleq3}\\
&&\hat \bv(\bx)=\hat \boldg(\bx),~\vert \bx \vert=r_0,  \label{bound3}\\
&&\hat \bv(\bx)\to\bv_\infty,~\vert \bx \vert\to \infty.  \label{boundinf3}
\end{eqnarray}

Denote
\begin{align*}
q_k(r)=[|{\Phi^{-1}}'(z)|^2 \hat w(\bx)]_k, \\
r_k(r)=[|{\Phi^{-1}}'(z)|^2 \hat \rho(\bx)]_k,
\end{align*}
where $[\cdot]_k$ means the $k$-th Fourier coefficient and $z=re^{i\varphi}=x_1+ix_2$.

Rewrite (\ref{intro:freediv3}),(\ref{intro:curleq3}) in polar coordinates using Fourier coefficients $\hat v_{r,k}$, $\hat v_{\varphi,k}$:
\begin{eqnarray*}
&&{\frac {1}{r}}{\frac {\partial }{\partial r}}\left(r \hat v_{r,k}\right)+{\frac {ik}{r}} \hat  v_{\varphi,k} = r_k(r),\\
&&{\frac {1}{r}}{\frac {\partial }{\partial r}}\left(r \hat v_{\varphi,k}\right)-{\frac {ik}{r}} \hat  v_{r,k} = q_k(r).
\end{eqnarray*}

From (\ref{intro:zerocirculation}), (\ref{intro:zeroflux}), Stokes and divergence formulas we will have:
\begin{align*}
&\lim_{R\to\infty}\oint_{\vert \bx \vert=R}  \bv(\bx) \cdot d\mathbf{l} =
\oint_{\partial \Omega}   \boldg(\bx) \cdot d\mathbf{l} +  \int_{\Omega}  w(\bx) \dx =\\  &\oint_{\vert \bx \vert=r_0} \hat  \boldg(\bx) \cdot d\mathbf{l} + \int_{B_{r_0}} \frac {\hat w(\bx)}{\vert \Phi' \vert^2} \dx =  \\
&\oint_{\vert \bx \vert=r_0} \hat  \boldg(\bx) \cdot d\mathbf{l} +  2 \pi \int_{r_0}^\infty  \left [\frac {\hat w(\bx)}{\vert \Phi' \vert^2} \right ]_{k=0} s \ds = 0,\\
&\lim_{R\to\infty}\oint_{\vert \bx \vert=R} (  \bv(\bx),\bn) dl=
\oint_{\partial \Omega} (  \boldg(\bx),\bn) dl +  \int_{\Omega} \hat \rho(\bx) \dx = \\ &\oint_{S_{r_0}} (  \hat \boldg(\bx),\bn) dl + \int_{B_{r_0}} \frac {\hat \rho(\bx)}{\vert \Phi' \vert^2} \dx = \\ &\oint_{S_{r_0}} ( \hat  \boldg(\bx),\bn) dl +
2 \pi \int_{r_0}^\infty  \left [\frac {\hat \rho(\bx)}{\vert \Phi' \vert^2} \right ]_{k=0} s \ds = 0.
\end{align*}
Here $[\cdot]_{k=0}$ means zero Fourier coefficient.

The solution of the system (\ref{intro:freediv3})-(\ref{boundinf3}) is given by
\begin{eqnarray} \label{intro:BiotSavar1_2}
&&\hat v_{r,k} =   \frac{ir^{-\vert k \vert-1}}2 \int_{r_0}^r s^{\vert k \vert+1}( q_k - i r_k) \ds \\ 
&&+   \frac{ir^{\vert k \vert-1}}2 \int_r^\infty s^{-\vert k \vert+1}(  q_k + i r_k)\ds +  \alpha_k i r^{-k-1}   + v^\infty_{r,k}, \nonumber \\ \label{intro:BiotSavar2_2}
&&\hat v_{\varphi,k} = \frac{r^{-\vert k \vert-1}}2 \int_{r_0}^r s^{\vert k \vert+1}(q_k - i r_k)\ds \\ 
&&- \frac{r^{\vert k \vert-1}}2 \int_r^\infty s^{-\vert k \vert+1}(q_k + i r_k)\ds + \alpha_k   r^{-k-1} + v^\infty_{\varphi,k}.\nonumber 
\end{eqnarray}
with
$$
\alpha_k = \frac{r_0^{k+1}}{2}\left (\hat g_{\varphi,k}  - i \hat g_{r,k} \right ).
$$

%Then for $k \in \ZM$
%\begin{align*}
%r_0^{k-1} \int_{r_0}^\infty s^{-k+1} \left(    q_k(s) + i r_k(s) \right)\ds +  \hat g_{\varphi,k} +i \hat g_{r,k}=\\v^\infty_{\varphi,k}  + i v^\infty_{r,k}.
%\end{align*}
%
%
%-----

Formulas (\ref{intro:BiotSavar1_2}), (\ref{intro:BiotSavar2_2}) with  condition (\ref{bound3}) lead to relations: 
\begin{align*}
r_0^{k-1}\int_{r_0}^\infty s^{-\vert k \vert+1} (  q_k + i r_k) \ds +  \hat g_{\varphi,k} +i \hat g_{r,k}= v^\infty_{\varphi,k}  + i v^\infty_{r,k},
\end{align*}
or 
\begin{align} 
&\int_{B_{r_0}} \frac {\hat w(\bx)+i\hat \rho(\bx)}{z^k} |{\Phi^{-1}}'(\bx)|^2 \dx 
+ \oint_{S_{r_0}} \frac{\hat \boldg(\bx)\cdot d\mathbf{l}}{z^k} 
+ i\oint_{S_{r_0}} \frac{(\hat \boldg(\bx),\bn)}{z^k} dl= \nonumber \\
 &\oint_{S_{r_0}} \frac{\bv_\infty\cdot d\mathbf{l}}{z^k} 
+ i\oint_{S_{r_0}} \frac{(\bv_\infty,\bn)}{z^k} dl \label{intconddiskconform}
\end{align}
for $k\in \NM \cup \{0\}$.

Returning to the original domain $\Omega$, since $| \hat \boldg | = |{\Phi^{-1}}'|\cdot |\boldg|$, and $dl$, $d\mathbf{l}$ should be multiply by $|\Phi'|$ under conformal map $\Phi$ from $\partial \Omega$ to $S_{r_0}$, then
finally these relations in $\Omega$  will take the form
\begin{align*} 
&\int_{\Omega} \frac {w(\bx)+i\rho(\bx)}{\Phi(z)^k}  \dx + \oint_{\partial \Omega} \frac{ \boldg(\bx)\cdot d\mathbf{l} + i( \boldg(\bx),\bn)dl}{\Phi(z)^k} = \nonumber \\
 &\oint_{\partial \Omega} \frac{\bv_\infty\cdot d\mathbf{l} + i(\bv_\infty,\bn) dl}{\Phi(z)^k},~k \in \NM \cup \{0\}.
\end{align*}

For horizontal flow $\bv_\infty=(v_\infty, 0)$ it will be the relations
\begin{align*}
& \int_{\Omega} \frac {w(\bx)+i\rho(\bx)}{\Phi(z)^k} d\bx + \oint_{\partial \Omega} \frac{\hat \boldg(\bx)\cdot d\mathbf{l}}{z^k} 
+ i\oint_{\partial \Omega} \frac{(\hat \boldg(\bx),\bn)}{z^k} dl= \\& \begin{cases} 0,~k\in \NM \cup \{0\},~k\neq 1, \\ 2 \pi i v_\infty,~k=1.
\end{cases}
\end{align*}

%with no-slip condition ($\boldg = 0$)
Formulas (\ref{intro:BiotSavar1_2}), (\ref{intro:BiotSavar2_2})  have an integral representation
\begin{align*}
\hat \bv(\bx)=\frac 1{2\pi} \int_{B_{r_0}}   \frac{(\bx-\by)^\perp}{\vert \bx-\by \vert ^2} |{\Phi^{-1}}'(\by)|^2 \hat w(\by)d\by  +  
\frac 1{2\pi} \int_{B_{r_0}}   \frac{\bx-\by}{\vert \bx-\by \vert ^2} |{\Phi^{-1}}'(\by)|^2 \hat \rho(\by)d\by + \\ 
\frac 1{2\pi} \oint_{S_{r_0}}   \frac{(\bx-\by)^\perp}{\vert \bx-\by \vert ^2} |{\Phi^{-1}}'(\by)| (\hat \boldg,\tau)dl  
\frac 1{2\pi} \oint_{S_{r_0}}   \frac{\bx-\by}{\vert \bx-\by \vert ^2} |{\Phi^{-1}}'(\by)|  (\hat \boldg,\bn)dl  +\bv_\infty.
\end{align*}
Here $\tau$ is a unit tangent vector to $\partial \Omega$ in a counter-clockwise direction.

In $\Omega$ this formula takes the form
\begin{align}
&\bv(\bx)=\frac 1{2\pi} D\Phi^t(\bx) \int_{\Omega}   \frac{(\Phi(\bx)-\Phi(\by))^\perp}{\vert \Phi(\bx)-\Phi(\by) \vert ^2} w(\by)d\by  +\nonumber\\
&\frac 1{2\pi} D\Phi^t(\bx) \int_{\Omega}   \frac{\Phi(\bx)-\Phi(\by)}{\vert \Phi(\bx)-\Phi(\by) \vert ^2} \rho(\by)d\by+\nonumber\\
&\frac 1{2\pi} D\Phi^t(\bx) \oint_{\partial \Omega}   \frac{(\Phi(\bx)-\Phi(\by))^\perp}{\vert \Phi(\bx)-\Phi(\by) \vert ^2} (\boldg,\tau)dl  + \nonumber \\
&\frac 1{2\pi} D\Phi^t(\bx) \oint_{\partial \Omega}   \frac{\Phi(\bx)-\Phi(\by)}{\vert \Phi(\bx)-\Phi(\by) \vert ^2} (\boldg,\bn)dl+\bv_\infty.   \label{intro:BS}
\end{align}

The kernels of the integrals involved in the above formula are the gradient and the skew gradient of the Green's function:
\begin{align*}
G(\bx,\by) = \frac 1{2\pi} \ln |\Phi(\bx)-\Phi(\by)|.
\end{align*}

\section{Uniqueness solvability of the planar div-curl problem}

\begin{theorem}Let $\Omega$ be an exterior domain with a Lipschitz boundary, such that there exists
a Riemann mapping $\Phi$ from $\Omega$ into $B_{r_0}$, which satisfies (\ref{intro:Phiprop}), (\ref{intro:Phiprop2}), and for $k \in \NM \cup \{0\}$ the following constraints on moments hold:
 \begin{align} 
&\int_{\Omega} \frac {w(\bx)+i\rho(\bx)}{\Phi(x_1+ix_2)^k}  \dx + \oint_{\partial \Omega} \frac{ \boldg(\bx)\cdot d\mathbf{l} + i( \boldg(\bx),\bn)dl}{\Phi(x_1+ix_2)^k} = \oint_{\partial \Omega} \frac{\bv_\infty\cdot d\mathbf{l} + i(\bv_\infty,\bn) dl}{\Phi(x_1+ix_2)^k}.\label{noslipcondintegralrieman}
\end{align} 

Then the following statements are true:
\begin{itemize}
\item if $\rho, w \in L_{2}(\Omega)$ then there exists the unique solution $\bv$ of the problem (\ref{intro:1})-(\ref{intro:4}). It is given by explicit formula (\ref{intro:BS}) and the estimate (\ref{nablaest}) holds.

\item if $\rho, w \in L_{2,N}(\Omega)$ with $N>1$ then 
\begin{align*}
\| \bv(\cdot) - \bv_\infty \|_{H^1(\Omega)} \leq C \left ( \| \rho \|_{L_{2,N}(\Omega)} + \| w \|_{L_{2,N}(\Omega)} + \|\boldg(\bx)\|_{H^{1/2}(\partial G)} \right ).
\end{align*}

\item if $\rho, w \in L_{2,N}(\Omega)$, $\nabla w \in L_{2}(\Omega)$ and no-slip condition $\boldg=0$ holds, then
\begin{align*}
&\|\bv(t,\cdot)\|_{L_\infty(\Omega)} \leq  \\ &~~~C\left ( \| \rho \|_{L_{2,N}(\Omega)} + \| w \|_{L_{2,N}(\Omega)} \right )^\frac14_{L_2(\Omega)} \times \|w(t,\cdot)\|_{L_2(\Omega)}^\frac 12  \|\nabla w(t,\cdot)\|^\frac 14 _{L_2(\Omega)} +  \|\bv_\infty\|_{\RM^2}.
\end{align*}
\end{itemize}

\end{theorem}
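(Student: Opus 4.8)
The plan is to reduce the problem on $\Omega$ to the already-solved problem on the exterior of the disk by means of the conformal change of variables of Section~3, and then to transport the estimates back. First I would invoke the computation leading to the transformed system (\ref{intro:freediv3})--(\ref{boundinf3}): the field $\hat\bv={D\Phi^{-1}}^t\bv\circ\Phi^{-1}$ of (\ref{intro:covariantv}) solves a div-curl problem on $B_{r_0}$ with modified sources $|{\Phi^{-1}}'|^2\hat w$, $|{\Phi^{-1}}'|^2\hat\rho$, boundary datum $\hat\boldg$ and the same limit $\bv_\infty$. Section~3 also shows that the constraints (\ref{noslipcondintegralrieman}) on $\Omega$ are exactly equivalent to the disk conditions (\ref{intconddiskconform}) for the transformed data, so the hypotheses of Theorems~\ref{L2estthmdisk} and~\ref{intro:propl2est} hold for $\hat\bv$.

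The key technical point is that the conformal distortion is uniformly bounded. From (\ref{intro:Phiprop2}) we have $|{\Phi^{-1}}'(z)|^2=1+O(1/|z|^2)$, hence $c\le|\Phi'|^2,\,|{\Phi^{-1}}'|^2\le C$ on the respective domains, with the two reciprocal. Together with the conformal Jacobian relation $\dx=|\Phi'|^2\dy$ this yields
\begin{align*}
\||{\Phi^{-1}}'|^2\hat\rho\|_{L_2(B_{r_0})}\approx\|\rho\|_{L_2(\Omega)},\qquad \||{\Phi^{-1}}'|^2\hat w\|_{L_2(B_{r_0})}\approx\|w\|_{L_2(\Omega)}.
\end{align*}
Since $\Phi(p)=p+O(1/p)$, the weights $(1+|\bx|^2)^N$ and $(1+|\by|^2)^N$ are comparable along the map, which upgrades these to the weighted equivalences $\||{\Phi^{-1}}'|^2\hat\rho\|_{L_{2,N}(B_{r_0})}\approx\|\rho\|_{L_{2,N}(\Omega)}$ and likewise for $w$; as $\Phi$ restricts to a bi-Lipschitz diffeomorphism of the boundaries, $\|\hat\boldg\|_{H^{1/2}(S_{r_0})}\approx\|\boldg\|_{H^{1/2}(\partial G)}$.

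With these equivalences I would apply Theorem~\ref{L2estthmdisk} to bound $\|\nabla\hat\bv\|_{L_2(B_{r_0})}$ for the first bullet and Theorem~\ref{intro:propl2est} to bound $\|\hat\bv-\bv_\infty\|_{H^1(B_{r_0})}$ for the second. Transporting back via $\bv={D\Phi}^t\hat\bv\circ\Phi$ and differentiating produces two kinds of terms: $D\Phi^t(\nabla\hat\bv)D\Phi$, whose $L_2(\Omega)$ norm is controlled by $\|\nabla\hat\bv\|_{L_2(B_{r_0})}$ through the bounded distortion and the change of variables, and a lower-order term $(D^2\Phi)\,\hat\bv$. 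The latter is where I expect the main difficulty: since $D^2\Phi=O(1/|\by|^3)$ one must absorb $\hat\bv$ against this fast-decaying weight, splitting $\hat\bv=(\hat\bv-\bv_\infty)+\bv_\infty$, estimating the constant part directly (so that $\int_\Omega|\by|^{-6}\dy<\infty$ in two dimensions) and the fluctuation by the integrability/$H^1$ control from the disk step. Uniqueness transfers from the disk case, since conformal maps preserve harmonicity and the $k=0$ instance of (\ref{noslipcondintegralrieman}) fixes the circulation and flux, so any two solutions differ by a harmonic field that vanishes by the Hodge argument used for Theorem~\ref{intro:propl2est}.

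For the third bullet I would bypass the conformal reduction and use a Gagliardo--Nirenberg interpolation in the plane,
\begin{align*}
\|\bv-\bv_\infty\|_{L_\infty(\Omega)}\le C\,\|\bv-\bv_\infty\|_{L_2(\Omega)}^{1/4}\,\|\nabla\bv\|_{L_2(\Omega)}^{1/2}\,\|\nabla^2\bv\|_{L_2(\Omega)}^{1/4},
\end{align*}
whose exponents are forced by scale invariance in $\RM^2$. The three factors are then supplied by the preceding bullets: $\|\bv-\bv_\infty\|_{L_2}$ by $\|\rho\|_{L_{2,N}}+\|w\|_{L_{2,N}}$ (the second bullet with $\boldg=0$), $\|\nabla\bv\|_{L_2}$ by $\|w\|_{L_2}$ and $\|\rho\|_{L_2}$ via (\ref{nablaest}), and $\|\nabla^2\bv\|_{L_2}$ by $\|\nabla w\|_{L_2}$ through one further differentiation of the Calder\'on--Zygmund representation (\ref{intro:BS}) under no-slip $\boldg=0$. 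The main obstacle overall is the bounded-distortion bookkeeping of the second step together with the lower-order $D^2\Phi$ term in the transfer; once the two-sided bounds on $|\Phi'|$ and the decay of $D^2\Phi$ are pinned down, everything reduces to the disk estimates already established.
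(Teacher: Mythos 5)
Your treatment of the first two bullets follows the paper's own route: conformal reduction of (\ref{intro:1})--(\ref{intro:4}) to the disk problem (\ref{intro:freediv3})--(\ref{boundinf3}), identification of (\ref{noslipcondintegralrieman}) with (\ref{intconddiskconform}), application of Theorems \ref{L2estthmdisk} and \ref{intro:propl2est}, and transport of the norms back using (\ref{intro:Phiprop}), (\ref{intro:Phiprop2}); your extra bookkeeping for the lower-order $(D^2\Phi)\,\hat\bv$ term only makes explicit a step the paper passes over, so these parts are fine.

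The third bullet, however, has a genuine gap. Your interpolation
$\|\bv-\bv_\infty\|_{L_\infty}\le C\,\|\bv-\bv_\infty\|_{L_2}^{1/4}\|\nabla\bv\|_{L_2}^{1/2}\|\nabla^2\bv\|_{L_2}^{1/4}$
requires a bound on $\|\nabla^2\bv\|_{L_2(\Omega)}$, and you propose to get it ``by one further differentiation of the Calder\'on--Zygmund representation'' in terms of $\|\nabla w\|_{L_2}$ alone. But the second derivatives of $\bv$ are controlled, via the div-curl system, by \emph{both} $\nabla w$ and $\nabla\rho$: roughly $\Delta\bv=\nabla\rho+\nabla^\perp w$, so $\|\nabla^2\bv\|_{L_2}\lesssim\|\nabla\rho\|_{L_2}+\|\nabla w\|_{L_2}$, and there is no way to drop the $\nabla\rho$ term (take $w=0$ and $\rho\in L_{2,N}$ with $\nabla\rho\notin L_2$; then $\bv$ is a gradient field whose second derivatives have exactly the regularity of $\nabla\rho$ and need not lie in $L_2$). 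Since the hypothesis of the third bullet only assumes $\rho\in L_{2,N}(\Omega)$ and $\nabla w\in L_2(\Omega)$, the factor $\|\nabla^2\bv\|_{L_2}^{1/4}$ cannot be bounded by the data, and your chain breaks. (A secondary issue: moving the extra derivative onto $w$ in the representation (\ref{intro:BS}) by integration by parts creates boundary terms with traces of $w$, $\rho$ on $\partial\Omega$, which the no-slip condition on $\bv$ does not kill.) The paper avoids second derivatives altogether: it interpolates $\|\bv-\bv_\infty\|_{L_\infty}\le C\|\bv-\bv_\infty\|_{L_4}^{1/2}\|\nabla\bv\|_{L_4}^{1/2}$, bounds $\|\nabla\bv\|_{L_4}$ by the Calder\'on--Zygmund estimate (\ref{bsest2}) with $p=4$ and then uses Ladyzhenskaya's inequality $\|w\|_{L_4}\le C\|w\|_{L_2}^{1/2}\|\nabla w\|_{L_2}^{1/2}$, together with $\|\bv-\bv_\infty\|_{L_4}^4\le C\|\bv-\bv_\infty\|_{L_2}^2\|\nabla\bv\|_{L_2}^2$, so only first derivatives of $\bv$ and of $w$ ever appear. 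To repair your argument, either switch to this $L_4$-level interpolation or add the hypothesis $\nabla\rho\in L_2(\Omega)$, which the theorem does not grant.
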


\begin{proof}
Uniqueness follows from the Hodge theory, since the boundary condition fixes the circulation and the flux around a solid. Existence follows from the explicit formula for the solution  (\ref{intro:BiotSavar1_2}), (\ref{intro:BiotSavar2_2}), or its integral representation (\ref{intro:BS}). 

From (\ref{noslipcondintegralrieman}) the relations (\ref{intconddiskconform}) are fulfiled, and then from the Theorem \ref{L2estthmdisk} follows the estimate of $\|\nabla \hat \bv (\bx)\|$ in $L_2(B_{r_0})$, and under Riemann mapping into $\Omega$ we will have the estimate of $\|\nabla \bv (\bx)\|$ in $L_2(\Omega)$.
%\begin{align*}
%& \int_{B_{r_0}} \frac {\hat w(\bx)+i\hat \rho(\bx)}{(x_1+ix_2)^k}  |{\Phi^{-1}}'(\bx)|^2 d\bx + \\ &\oint_{S_{r_0}} \frac{\hat \boldg(\bx)\cdot |{\Phi^{-1}}'(\bx)|d\mathbf{l} +  i(\hat \boldg(\bx),\bn) |{\Phi^{-1}}'(\bx)| dl }{(x_1+ix_2)^k} \nonumber
%\\
% &\oint_{S_{r_0}} \frac{\bv_\infty\cdot d\mathbf{l} + i(\bv_\infty,\bn)dl}{(x_1+ix_2)^k},~k \in \NM.
%\end{align*}

%\begin{align*}
%\int_{B_{r_0}} w(\Phi^{-1}(\bx)) |{\Phi^{-1}}'(\bx)|^2 d\bx=0,\\
%\int_{B_{r_0}} \rho(\Phi^{-1}(\bx)) |{\Phi^{-1}}'(\bx)|^2 d\bx=0.
%\end{align*}

Let's prove the $L_2$-estimate of the velocity. From the Theorem \ref{intro:propl2est} with help of (\ref{intro:Phiprop2}) with some $C_1$, $C_2$ holds: 
\begin{align*}
&\| \hat \bv(\cdot) - \bv_\infty \|_{H^1(B_{r_0})} \leq \\ &C_1 \Big ( \left \|
\rho(\Phi^{-1}(\cdot)) |{\Phi^{-1}}'(\cdot)|^2 \right \|_{L_{2,N}(B_{r_0})} + \left \|
 w(\Phi^{-1}(\cdot)) |{\Phi^{-1}}'(\cdot)|^2 \right \|_{L_{2,N}(B_{r_0})} +   \\ &\left \|\hat \boldg(\bx)|{\Phi^{-1}}'(\cdot)| \right \|_{H^{1/2}(S_{r_0})} \Big ) \leq C_2 \Big ( \| w(\cdot) \|_{L_{2,N}(\Omega)}
  + \| \rho(\cdot) \|_{L_{2,N} (\Omega)}  + \|\boldg(\bx)\|_{H^{1/2}(\partial \Omega)}\Big ).
\end{align*}

Then using the inequality with some $C_3$ 
\begin{align*}
\|  \bv(\cdot) - \bv_\infty \|_{H^1(\Omega)} \leq C_3 \| \hat \bv(\cdot) - \bv_\infty \|_{H^1(B_{r_0})}
\end{align*}
we obtain the estimate of $\| \bv(\cdot) - \bv_\infty \|_{H^1(\Omega)}$. 

Now we will prove the last estimate of the vector field $\bv(t,\cdot)$ in $L_\infty$ under no-slip condition $\boldg=0$. The following interpolation inequality is valid:
\begin{align*}
&\|\bv(t,\cdot) - \bv_\infty\|_{L_\infty(\Omega)} \leq C \|\bv(t,\cdot) - \bv_\infty\|^\frac 12 _{L_4(\Omega)} \|\nabla \bv(t,\cdot)\|^\frac 12_{L_4(\Omega)}.
\end{align*}

To bound the $L_4$-norm, we will use the estimate of the Calder\'on-Zygmund integral operator (\ref{bsest2}):
\begin{align*}
&\|\nabla \bv(t,\cdot)\|_{L_4(\Omega)} \leq C \|w(t,\cdot) \|_{L_4(\Omega)}.
\end{align*}

Then in view of the inequality
\begin{align*}
&\|w(t,\cdot) \|_{L_4(\Omega)} \leq C \|w(t,\cdot)\|^\frac 12_{L_2(\Omega)} \|\nabla w(t,\cdot)\|^\frac 12_{L_2(\Omega)}
\end{align*}
we will have
\begin{align*}
&\|\nabla \bv(t,\cdot)\|_{L_4(\Omega)} \leq C \|w(t,\cdot)\|^\frac 12_{L_2(\Omega)} \|\nabla w(t,\cdot)\|^\frac 12_{L_2(\Omega)}.
\end{align*}

And the $L_4$-norm of the vector field is controlled by the following $L_2$-norms:
\begin{align*}
\|\bv(t,\cdot) - \bv_\infty\|^4_{L_4(\Omega)} \leq  C \|\bv(t,\cdot)- \bv_\infty\|^2_{L_2(\Omega)} \|\nabla \bv(t,\cdot)\|^2_{L_2(\Omega)}.
\end{align*}

Then
\begin{align*}
&\|\bv(t,\cdot) - \bv_\infty\|_{L_\infty(\Omega)} \leq C \|\bv(t,\cdot)- \bv_\infty\|^\frac14_{L_2(\Omega)} \|w(t,\cdot)\|_{L_2(\Omega)}^\frac 12  \|\nabla w(t,\cdot)\|^\frac 14 _{L_2(\Omega)}.
\end{align*}
%\begin{align*}
%&\|\bv(t,\cdot) - \bv_\infty\|_{L_\infty(\Omega)} \leq \\
%&~~~~~C_1 \|\bv(t,\cdot)- \bv_\infty\|^\frac14_{L_2(\Omega)} \|\nabla \bv(t,\cdot)\|^\frac14_{L_2(\Omega)} \|w(t,\cdot)\|_{L_2(\Omega)}^\frac 14  \|\nabla w(t,\cdot)\|^\frac 14 _{L_2(\Omega)} \leq \\
%&~~~~~~~~~~C_2 \|\bv(t,\cdot)- \bv_\infty\|^\frac14_{L_2(\Omega)} \|w(t,\cdot)\|_{L_2(\Omega)}^\frac 12  \|\nabla w(t,\cdot)\|^\frac 14 _{L_2(\Omega)}.
%\end{align*}

The Theorem is proved.\end{proof}

\section{Numerical results}
Here we will present the results of the numerical simulation. In our experiment we consider exterior div-curl problem for solenoidal flows with no-slip condition on the solid and horizontal flow at infinity $(v_\infty,0)$:
\begin{eqnarray}
&&{\rm div}~ \bv(\bx) = 0, \label{num:1}\\
&&{\rm curl}~ \bv(\bx) = w(\bx), \label{num:2}\\
&&\bv(\bx)=0,~\bx\in\partial\Omega,\label{num:3}\\
&&\bv(\bx)\to (v_\infty,0),~|\bx|\to \infty. \label{num:4}
\end{eqnarray}

Since the no-slip condition (\ref{num:3}) is fullfiled, then there exists a stream function $\psi$
$$\bv = \nabla^\perp \psi,$$ 
where $\nabla_\bx^\perp = (-\partial_{x_2}, \partial_{x_1})$.

The condition (\ref{num:3}) is a particular case of a slip interaction between solid and fluid, which is expressed by the formula: 
\begin{align}\label{num:slip}
(\bv, \bn) = 0,~\bx \in \partial \Omega.
\end{align}

In terms of stream function it can be rewritten as
\begin{align}
\psi(\bx) \equiv const,~\bx \in \partial \Omega. \label{num:noslipstream}
\end{align}

Also no-slip condition leads to Neumann boundary ones:
\begin{align}\label{num:neumann}
\frac {\partial \psi (\bx)}{\partial \bn} =0,~~\bx \in \partial \Omega.
\end{align}

At infinity horizontal flow $(v_\infty,0)$ transfers to boundary constraint on $\psi$: 
\begin{align} \label{num:infcond}
\psi (\bx) \to -v_\infty\cdot x_2,~|\bx|\to \infty.
\end{align}

Applying the curl operator to (\ref{num:2}) it goes to Poisson equation:
\begin{align}\label{num:poisson}
\Delta \psi = \curl \bv.
\end{align}

But the exterior Poisson problem with boundary condition (\ref{num:noslipstream}), (\ref{num:neumann}), (\ref{num:infcond}) is overdetermined and incorrect. From (\ref{num:3}) follows the condition of the zero-circulation around boundary:
\begin{align}\label{num:zerocircularity}
\oint_{\partial \Omega} \bv \cdot d\mathbf{l} = 0.
\end{align}

This condition combined with slip constraint (\ref{num:slip}) guarantees the correctness of the exterior div-curl problem (\ref{num:1}),(\ref{num:2}),(\ref{num:4}). And so (\ref{num:noslipstream}), (\ref{num:infcond}), (\ref{num:zerocircularity}) guarantee the correctness of the exterior Poisson problem (\ref{num:poisson}) which can be solved numerically by means of the finite element method (FEM). Conditions (\ref{num:noslipstream}) and (\ref{num:zerocircularity}) generate $N$ boundary relations, where $N$ is the amount of boundary nodes in a triangulated domain. An unbounded domain is restricted by a rectangular ones where we set the condition on infinity (\ref{num:infcond}). This condition also generates relations by the number of nodes in the rectangular domain. So, FEM becomes a closed computational method for the system (\ref{num:noslipstream}), (\ref{num:infcond}), (\ref{num:poisson}), (\ref{num:zerocircularity}).

In the numerical experiment Poisson equation was solved by FEM with continuous piecewise linear basis functions on triangles in a triangulated plane region with about 3600 nodes. Data for the vorticity function $w$ on the right-hand side of (\ref{num:2}) which satisfies the finite set of the integral  restrictions (\ref{noslipcondintegralrieman}) for $k<20$ with $\rho \equiv 0$ was generated in the AGVortex program from vortical flow with no-slip condition. In Figure~\ref{fig1} there is presented the solution of the div-curl problem for flows around a rectangle with vorticity, which is illustrated in
 Figure~\ref{fig2}. Figures~\ref{fig3}, \ref{fig4} demonstrate numerical solution of the same problem for flow around line segment.

\section{Discussion}
The velocity field vanishes on the boundary in some approximation (Figures~\ref{fig1}, \ref{fig3}) since in the computational process only a finite set of relations (\ref{noslipcondintegralrieman}) can be imposed ($k<20$ in the presented experiment), and the boundary velocity may produce high-frequency fluctuations with small amplitude corresponding to non-zero Fourier coefficients for large indices $k$. 

Regarding the geometry of the exterior domain $\Omega$, in the current research its boundary consists of two connected components (including point at infinity). But the presented approach can be extended to the case of more than two connected components, when the fluid flows through several solids. %If these solids are far apart from each other, then (\ref{noslipcondintegralrieman}) becomes an approximation of the no-slip condition for each object. %The relation on moments (\ref{noslipcondintegralrieman}) will include superposition of several Riemann mappings which are taken by the number of solids. 

In exterior of the line segment the statement of the div-curl problem has its own peculiarities. In that case the line segment has "two sides" in some sense, and no-slip condition must be imposed on both sides of it. It can be considered as a limiting case of ellipses when its thickness goes to zero. Another way the definition of the solution can be obtained via Riemann mapping of the div-curl problem into the exterior of the disk, where we can correctly define the solution of the equation. 

Vortical flow in Figures~\ref{fig2}, \ref{fig4} breaks down into two parts with clockwise rotation (red color) and counter clockwise (blue color). These regions are separated by zero-vorticity curve ($w = 0$). This curve can pass through a wall of the line segment (see Figure \ref{fig4}) by reasons of symmetry.

Orthogonality relations (\ref{noslipcondintegralrieman}) can be extended to 3D domains. For example, in the case of the spherical solid, they can be written using vector spherical harmonics. No-slip condition implies zero value of the curl's normal component $(w,\bn) = 0$ on the boundary when the tangent components of $w$ must satisfy integral relations over the exterior of the sphere. 

%Authors should discuss the results and how they can be interpreted from the perspective of previous studies and of the working hypotheses. The findings and their implications should be discussed in the broadest context possible. Future research directions may also be highlighted.

%%%%%%%%%%%%%%%%%%%%%%%%%%%%%%%%%%%%%%%%%%
\section{Conclusion}
Integral constraints (\ref{noslipcondintegralrieman})  imposed on vorticity function $w$,  divergence $\rho$ and boundary function $\boldg$ give unique
solvability of the exterior div-curl. It can be solved numerically   by reducing to Poisson problem. This approach can be extended to flows around several streamlined solids.

For solenoidal flows no-slip condition generates a series of integral constraints on vorticity (orthogonality relations). Under these restrictions the div-curl problem can be numerically solved with more typical boundary conditions - slip condition combined with zero circulation. The source of vorticity data, which satisfies such integral constraints, can be derived by solving any equations of motion written in vorticity form that describe fluid dynamics - Stokes, Navier-Stokes, and Boussinesq equations. And the div-curl problem becomes the crucial step of the simulation procedure in the planar computational fluid dynamics. 
%This section is not mandatory, but can be added to the manuscript if the discussion is unusually long or complex.

\section{Data availability statements}
The experimental data and the simulation results that support 
the findings of this study are available in Figshare with the identifier \\ https://doi.org/10.6084/m9.figshare.30028255

\section{Statements and Declarations}
No funding was received for conducting this study.

\newpage

\begin{figure}[h!]
\centering
\includegraphics[width=0.9\textwidth]{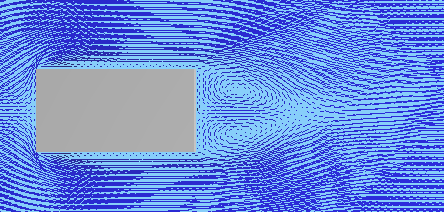}
\caption{Velocity field around rectangle. Flow data was generated in program AGVortex.}\label{fig1}
\end{figure} 
 
\begin{figure}[h!]
\centering
\includegraphics[width=0.9\textwidth]{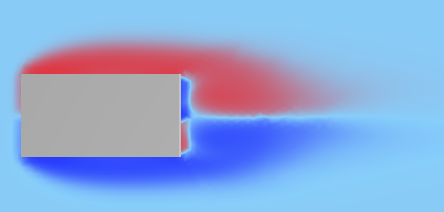}
\caption{Vortical flow around rectangle. Flow data was generated in program AGVortex.  }\label{fig2}
\end{figure}

\begin{figure}[h!]
\centering
\includegraphics[width=0.9\textwidth]{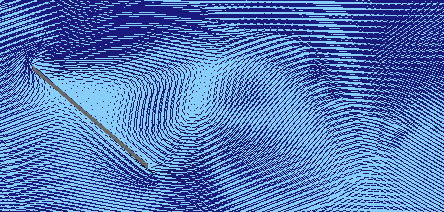}
\caption{Velocity field around line segment. Flow data was generated in program AGVortex. }\label{fig3}
\end{figure}

\begin{figure}[h!]
\centering
\includegraphics[width=0.9\textwidth]{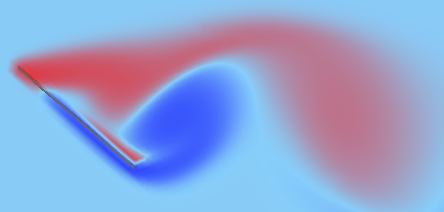}
\caption{Vortical flow around line segment. Flow data was generated in program AGVortex.    }\label{fig4}
\end{figure}

\end{document}